\newtheorem{thm}{Theorem}
\newtheorem{lem}{Lemma}
\newtheorem{claim}{Claim}
\newtheorem{cor}{Corollary}
\newtheorem{defn}{Definition}
\newcommand{\Zekhaya}[1]{{\color{red} #1}}
\let\oldenumerate\enumerate
\renewcommand{\enumerate}{
  \oldenumerate
  \setlength{\itemsep}{0pt}
  \setlength{\parskip}{0pt}
  \setlength{\parsep}{0pt}
}
\def\vertex(#1){\put(#1){\circle*{2}}}
\def\vertexo(#1){\put(#1){\circle{2}}}
\def\vert(#1){\put(#1){\circle*{1.5}}}
\def\verto(#1){\put(#1){\circle{1.5}}}
\def\lab(#1)#2{\put(#1){\makebox(0,0)[c]{#2}}}
\begin{document}

\title{The number of $1$-nearly independent vertex subsets}

\author{Eric Ould Dadah Andriantiana \\
	Department of Mathematics (Pure and Applied) \\
	Rhodes University \\
	Makhanda, 6140 South Africa\\
	\small \tt Email: e.andriantiana@ru.ac.za \\\\Zekhaya B. Shozi\thanks{Research supported by the National Graduate Academy for Mathematical and Statistical Sciences (NGA-MaSS), Sol Plaatje University}\\
	Department of Mathematical Sciences\\
	Sol Plaatje University\\
	Kimberley, 8301 South Africa\\
\small \tt Email: zekhaya.shozi@spu.ac.za
}

\date{}
\maketitle

\begin{abstract}
 Let $G$  be a graph with vertex set $V(G)$ and edge set $E(G)$. A subset $I$ of $V(G)$ is an independent vertex subset if no two vertices in $I$ are adjacent in $G$. We study the number, $\sigma_1(G)$, of all subsets of  $v(G)$ that contain exactly one pair of adjacent vertices. We call those subsets 1-nearly independent vertex subsets. Recursive formulas of $\sigma_1$ are provided, as well as some cases of explicit formulas. We prove a tight lower (resp. upper) bound on $\sigma_1$ for graphs of order $n$. We deduce as a corollary that the star $K_{1,n-1}$ (the tree with degree sequence $(n-1,1,\dots,1)$) is the $n$-vertex tree with smallest $\sigma_1$, while it is well known that $K_{1,n-1}$ is the $n$-vertex tree with largest number of independent subsets.\end{abstract}

{\small \textbf{Keywords:} $1$-nearly independent vertex subset; Minimal connected graphs; Maximal graphs. } \\
\indent {\small \textbf{AMS subject classification:} 05C69}
\newpage
	
\section{Introduction}

A simple and undirected graph $G$ is an ordered pair of sets $G=(V(G),E(G))$, where every element of the set of $E(G)$ of edges is a $2$-element subset of the set of vertices $V(G)$. $|V(G)|$ is the order of $G$, and $|E(G)|$ its size. For graph theory notation and terminology, we generally follow~\cite{henning2013total}. We use the standard notation $[k] = \{1,\ldots,k\}$.

An \emph{independent (vertex) subset} of a graph $G$, with vertex set $V(G)$ and edge set $E(G)$, is any subset of $V(G)$ that does not contain adjacent vertices. The number of independent  subsets of a graph is a popular graph invariant with rich literature. See the survey in \cite{wagner2010maxima}, where it is called the \emph{Merrifield-Simmons index}. See also the book \cite{li2012shi} for an even more extensive survey. The book itself is on the energy of graphs but given the strong connection between the number of independent vertex subsets and the energy of graphs, the book also contains a wealth of results on the number of independent vertex subsets.

Merrifield and Simmons used \cite{Merrifield198055} the number independent subsets  of molecular graphs and as  a measure of  molecular complexity, bond strength and boiling point  of the associated molecules. This boosted the interest of both chemists and mathematicians to study the invariant. Obtained results go beyond just the class of molecular graphs. In \cite{Andriantiana2013724} the structure of the tree with a given degree sequence $D$ that has the largest number of independent subsets is fully characterised. The result implies as corollaries characterisations of trees with largest number of independent subsets in various other classes like trees with fixed order, or with fixed order and given maximum degree.

Various ways of generalisation of the notion of independent subsets have been attempted. For example \cite{jagota2001generalization} generalised the concept of maximal-independent set, by considering the $k$-insulated set $S$ of a graph $G$ defined as a subset of its vertices such that each vertex in $S$ is adjacent to at most $k$ other vertices in $S$ and each vertex not in $S$ is adjacent to at least $k+1$ vertices in $S$. See also \cite{drmota1991generalized}, which studies subsets which do not contain pair of vertices with distance shorter than a specified integer $k$. In this paper, we propose a new other generalisation.

$H$ is a \emph{subgraph} of $G$ if $V(H)\subseteq V(G)$  and $E(H)\subseteq E(G)$. If furthermore 
$$
E(H)=E(G)\cap \{\{u,v\}:u,v\in V(G)\},
$$
then we say that $H$ is an \emph{induced subgraph} of $G$. This means that for a given set of vertices $V(H)$ it has all possible edges of $G$ that it can have. Note that to any independent subset $I$ of $G$ corresponds an induced subgraph $(I,\emptyset)$ of $G$. We propose one way to generalise the notion of independent subsets. We call any induced subgraph of size $k$ a $k$-nearly independent subset of $G$. Let $\sigma_k(G)$ denotes the number of $k$-nearly independent subsets of $G$. $\sigma_0(G)$ is the number of independent susbets of $G$. 

In this paper, we study $\sigma_1$. At least for the classes of graphs we investigated, the behaviour of $\sigma_1$ has little in common with that of $\sigma_0$. 
Among all graphs of order $n$, while the edgeless  graph $\overline{K_n}$ has the largest $\sigma_0$, it has the smallest $\sigma_1$ as $\sigma_1(\overline{K_n})=0$. The complete graph $K_n$, that has all possible edges, which has the smallest $\sigma_0$ does not reach an extremal value for $\sigma_1$ if $n\geq 6$: not the minimum nor the maximum. 

The rest of the paper is structured as follows. We start with a preliminary sections that contains technical formulas, some of which will be used in this paper, while others are included because we expect that they will be useful in further studies of $\sigma_1$. We also investigate the effect of an edge removal in that section and provide explicit expression of $\sigma_1$ for selected types of graphs. The main results are in Section \ref{Sec:Main}. There, we provide a full characterisation of the family of graphs that reach the minimum $\sigma_1$ among all graphs of order $n$ size $m$. Several corollaries follows from this. For example, if $m=n-1$, the star $K_{1,n-1}$ (the connected $n$-vertex graph with $n-1$ vertices of degree $1$) is a member of the family. Hence it has the minimum $\sigma_1$ among all trees, while it is known to have the maximum $\sigma_0$. The characterisation of the graph of order $n$ and maximum $\sigma_1$ that we characterise at the end of the section is a rather unusual graph. For $n\geq 8$ the maximum $\sigma_1$ is attained by two graph both with maximum degree $1$, one has size $3$ and the other has size $4$. 

\section{Preliminary}

 Let $G$ be a graph with vertex set $V(G)$, edge set $E(G)$, order $n = |V(G)|$ and size $m = |E(G)|$. We denote the degree of a vertex $v$ in $G$ by $\deg_Gv$. The maximum degrees in $G$ is denoted by $\Delta(G).$ 
 The complement of $G$ is denoted by $\overline{G}$ and defined as 
 $$\overline{G}=(V(G),\{uv: u,v\in V(G), u\neq v \text{ and }uv\notin E(G)\}).$$
 
 For a subset $S$ of vertices of a graph $G$, we denote by $G - S$ the graph obtained from $G$ by deleting the vertices in $S$ and all edges incident to them. If $S = \{v\}$, then we simply write $G - v$ rather than $G - \{v\}$. For any graphs $G$ and $H$, we define the \emph{join}
 $$
 G+H=(V(G)\cup V(H), E(G)\cup E(H)\cup\{uv: u\in V(G)\text{ and } v\in V(H) \})
 $$
 obtained by adding all possible edges between $G$ and $H$. 
 The subgraph of $G$ induced by the set $S$ is denoted by $\langle S \rangle_G$. We denote the path graph, cycle graph, wheel graph and complete graph on $n$ vertices by $P_n$, $C_n$, $W_n$ and $K_n$, respectively. 

For positive integers $r$ and $s$, we denote by $K_{r,s}$ the bipartite graph with partite sets $X$ and $Y$ such that $|X|=r$ and $|Y|=s$. A bipartite graph $K_{1,n-1}$ is also called a \emph{star}. Let $k\ge 2$ be an integer. A \emph{broom graph}, denoted $B_n^k$, is the tree of order $n$ obtained from the path, $P_k$, of order $k$ by adding $n-k$ new vertices and then joining them to exactly one end-vertex of $P_k$. A \emph{lollipop graph}, denoted $L_n^k$, is the graph of order $n$ obtained from a path $P_k$ of order $k$, by adding a complete graph, $K_{n-k}$, of order $n-k$ and then joining every vertex of $K_{n-k}$ to exactly one end-vertex of $P_k$. A \emph{tadpole}, denoted $T_n^k$, is the graph of order $n$ obtained from a path, $P_k$, of order $k$ by adding another path, $P_{n-k}$, of order $n-k$ and then joining the end-vertices of $P_{n-k}$ to exactly one end-vertex of $P_k$.

\subsection{Recursive formula}
Suppose that $G$ and $H$ are disjoint graphs and $K=G\cup H$. Then, every $1$-nearly independent subsets $I$ of $K$ splits uniquely into two parts: the part $I_G$ that is in $G$ and the part $I_H$ that is in $H$. If the edge in $I$ is from $G$ then $I_G$ is a $1$-nearly independent subset and $I_H$ an independent subset. If the edge in $I$ is from $H$, then $I_H$ is a $1$-nearly independent subset while $I_G$ is an independent subset. Hence we have
$$
\sigma_1(G\cup H)
=\sigma_1(G)\sigma_0(H)+\sigma_0(G)\sigma_1(H).
$$
For any vertex $v$ of a graph $G$
\begin{align}
\label{Eq:Rec}
\sigma_1(G)
=\sigma_1(G-v)+\sigma_1(G-N[v])+\sum_{u\in N(v)}\sigma_0(G-(N[u]\cup N[v]))>\sigma_1(G-v),
\end{align}
where $\sigma_1(G-v)$ counts the number of $1$-nearly independent subsets that do not contain $v$, $\sigma_1(G-N[v])$ counts those that contain $v$ as a single vertex, and $\sum_{\in N(v)}\sigma_0(G-(N[u]\cup N[v]))$ counts those that contain $v$ is as an edge. %Thus, removing a vertex from a graph always decreases $\sigma_1$ of the graph.

\subsection{Effect of an edge or a vertex removal on $\sigma_1$}

From Equation \ref{Eq:Rec}, we see that removing a vertex $v$ from a graph $G$ decreases its $\sigma_1$, or keeps it unchanged if $\sigma_1(G-N[v])=0$ and $N(v)=\emptyset$. This condition means that $v$ is an isolated vertex and $G-N[v]=G-v$ has no edge. The graph $G$ is edge-less in this case.

Removing an edge might leave the value of $\sigma_1$ unchanged, decreasing or increasing. Below are some examples of situations when some of these possibilities arises.

Note that for any edge $e$ of $K_3$ we have
$$
\sigma_1(K_3)=3>\sigma_1(K_3-e)=\sigma_1(P_3)=2,
$$
while for any edge $e'$ of $P_3$ we have
$$
\sigma_1(P_3)=2=\sigma_1(P_3-e')=2\sigma_1(K_2),
$$
and if $e''$ is the middle edge of $P_4$, then
$$
\sigma_1(P_4)=5<\sigma_1(P_4-e'')=\sigma_1(2K_2)=6.
$$
%Suppose that we remove an edge $uv$ from a graph $G$. Then we lose the $1$-nearly independent subsets that contain the edge $uv$, there are $\sigma_0(G-(N_G[u]\cup N_G[v])$ of those. But, we eventually gain more $1$-nearly independent sets that contains $u$ and $v$ as isolated vertices, there are $\sigma_1(G-(N_G[u]\cup N_G[v])$ of those. Hence,
%$$
%\sigma_1(G-uv)=\sigma_1(G)+\sigma_1(G-(N_G[u]\cup N_G[v]))- \sigma_0(G-(N_G[u]\cup N_G[v])).
%$$
%In particular, if $G$ is a complete graph, then $G-(N_G[u]\cup N_G[v])$ is empty, and thus
%$\sigma_0(G-(N_G[u]\cup N_G[v])=1$
%while $\sigma_1(G-(N_G[u]\cup N_G[v])=0$. Thus, removing any edge from a complete graph will decrease its $\sigma_1$. 

%\begin{rem}
%\label{Rem:EdgeRem}
%More generally, if in any graph $G$, $uv$ is an edge and $N_G[u]\cup N_G[v]=V(G)$, then removing the edge $uv$ from $G$ decreases $\sigma_1.$
%\end{rem}

\subsection{Explicit formulas for $\sigma_1$ of some graphs}

%In this subsection we present explicit formulas of the number, $\sigma_1$, of $1$-nearly independent subsets of some special classes of graphs.
Using known formulas for $\sigma_0$ and some of the recursive formulas in the previous subsection, we present in this section explicit formulas for $\sigma_1$ of a complete graph $K_n$, an unicyclic graph $U_n$, a path $P_n$, a cycle $C_n$,  and a wheel graph $W_n$ in terms of $n$. More formulas of the $\sigma_1$ of a broom graph $B_n^k$, a lollipop graph $L_n^k$, and a tadpole $T_n^k$ in terms of $n$, $k$ and $\sigma_1$ of paths are also presented. %(P_s)$ and $\sigma_0(P_t)$, where $P_s$ and $P_t$ are paths of order $s$ and $t$, respectively, with $s,t < n$. 

%Let $f: \mathbb{Z} \longrightarrow \mathbb{Z}^+$ be a function defined by
%\begin{align*}
%    \sigma_0(G_t) = \begin{cases}
%        1 & \text{ if } t\le 0\\\\
%        \sigma_0(G_t) & \text{ if } t\ge 1,
%    \end{cases}
%\end{align*}
%where $G_t$ is any graph of order $t$, for some $t\in \mathbb{Z}$.
It is convenient to set $\sigma_1(P_t)=0$ whenever $t\leq 0$ and $\sigma_0(P_t)=1$ whenever $t\leq 0$.

Let $\alpha = \frac{1+\sqrt{5}}{2}$ and $\beta = \frac{1-\sqrt{5}}{2}$. Then, we have %the following identities:
%\begin{itemize}
    $\alpha + \beta =1$,
     $\alpha - \beta = \sqrt{5}$,
     $\alpha\cdot \beta = -1$.
%\end{itemize}

The following formulas are well known, see for example \cite{andriantiana2010number} and \cite{wagner2010maxima}. 

\begin{thm}[cf. \cite{andriantiana2010number}]
\label{thm:sigma-of-a-path-by-eric}
    For $n \in \mathbb{N}$, %if $P_n$ is a path of order $n$, then 
    we have
    \begin{align}
    \label{eq:sigma-of-a-path-by-eric}
        \sigma_0(P_n) = \frac{1}{\sqrt{5}} \left( \alpha^{n+2} - \beta^{n+2} \right).
    \end{align}
\end{thm}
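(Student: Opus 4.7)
The plan is to derive the standard Fibonacci-type recurrence for $\sigma_0(P_n)$ and then solve it using the characteristic roots $\alpha$ and $\beta$ already introduced in the paper.

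First, I would apply to $\sigma_0$ the same decomposition strategy that underlies Equation~(\ref{Eq:Rec}). Letting $v$ be an end-vertex of $P_n$, every independent subset $I$ of $P_n$ either avoids $v$, in which case $I$ is an independent subset of $P_n - v \cong P_{n-1}$, or contains $v$, in which case $I \setminus \{v\}$ is an independent subset of $P_n - N[v] \cong P_{n-2}$. Therefore
$$
\sigma_0(P_n) = \sigma_0(P_{n-1}) + \sigma_0(P_{n-2})
$$
for all $n \geq 2$. With the conventions $\sigma_0(P_t) = 1$ for $t \leq 0$ stated just above the theorem, together with the direct verification $\sigma_0(P_1) = 2$ and $\sigma_0(P_2) = 3$, the sequence $\sigma_0(P_n)$ is fully determined.

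Next I would solve the recurrence. Its characteristic polynomial is $x^2 - x - 1 = 0$, whose roots are precisely $\alpha$ and $\beta$. Hence there exist constants $A, B$ such that $\sigma_0(P_n) = A\alpha^{n+2} + B\beta^{n+2}$. Substituting $n = 0$ and $n = -1$ (or any two convenient indices) and using the identities $\alpha + \beta = 1$, $\alpha - \beta = \sqrt{5}$, and $\alpha\beta = -1$ already noted in the paper, the linear system yields $A = 1/\sqrt{5}$ and $B = -1/\sqrt{5}$, giving the claimed closed form.

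Alternatively, and perhaps more transparently, one can simply verify the formula by induction on $n$. The base cases reduce to checking $\frac{1}{\sqrt{5}}(\alpha^2 - \beta^2) = \alpha + \beta = 1$ and $\frac{1}{\sqrt{5}}(\alpha^3 - \beta^3) = 2$, both immediate from the listed identities. The inductive step follows from $\alpha^2 = \alpha + 1$ and $\beta^2 = \beta + 1$, which imply $\alpha^{n+2} = \alpha^{n+1} + \alpha^n$ and similarly for $\beta$, so the closed-form expression satisfies the same recurrence as $\sigma_0(P_n)$. There is no real obstacle in this argument; the only point to be careful about is lining up the index shift (exponent $n+2$ rather than $n$) with the conventions $\sigma_0(P_0) = 1$ and $\sigma_0(P_{-1}) = 1$, which makes the verification of the base case the one spot where a small computation is unavoidable.
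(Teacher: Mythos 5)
Your argument is correct: the deletion of an end-vertex gives the Fibonacci recurrence $\sigma_0(P_n)=\sigma_0(P_{n-1})+\sigma_0(P_{n-2})$, the stated conventions $\sigma_0(P_t)=1$ for $t\le 0$ are consistent with the closed form (e.g.\ $\tfrac{1}{\sqrt{5}}(\alpha^2-\beta^2)=\alpha+\beta=1$), and solving via the roots of $x^2-x-1$ (or the induction you sketch) yields exactly \eqref{eq:sigma-of-a-path-by-eric} with the correct index shift. The paper itself gives no proof of this theorem --- it is cited as well known from \cite{andriantiana2010number} and \cite{wagner2010maxima} --- and your derivation is the standard one, so there is nothing to reconcile.
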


%Again in 2010, Wagner and Gutman \cite{wagner2010maxima} presented the following theorem which will be also be useful in establishing our explicit formulas.

\begin{thm}[cf. \cite{wagner2010maxima}]
    \label{thm:sigma-union-is-product-of-sigmas}
    If $G_1, G_2, \ldots, G_r$ are the connected components of a graph $G$, then
    \begin{align}
        \label{eq:sigma-union-is-product-of-sigmas}
        \sigma_0(G) = \sigma_0\left(\bigcup\limits_{i=1}^r G_i \right) = \displaystyle \prod\limits_{i=1}^r \sigma_0(G_i).
    \end{align}
\end{thm}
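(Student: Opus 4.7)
The plan is to prove the statement by induction on the number of components $r$, after first establishing the two-component case as a separate lemma. The base case $r=1$ is immediate, since $G$ equals its sole component. Therefore the content of the argument lies in showing that for two disjoint graphs $G_1$ and $G_2$ with $G = G_1 \cup G_2$, we have $\sigma_0(G) = \sigma_0(G_1)\,\sigma_0(G_2)$; the general case then follows by writing $G = G_1 \cup \bigl(\bigcup_{i=2}^{r} G_i\bigr)$ and applying the inductive hypothesis to the second factor.

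For the two-component case, I would exhibit an explicit bijection
\[
\Phi : \mathcal{I}(G) \longrightarrow \mathcal{I}(G_1) \times \mathcal{I}(G_2), \qquad \Phi(I) = \bigl(I \cap V(G_1),\, I \cap V(G_2)\bigr),
\]
where $\mathcal{I}(H)$ denotes the collection of independent subsets of $H$. The map $\Phi$ is well defined because any subset of an independent set is independent, so each $I \cap V(G_i)$ is independent in $G_i$. For the inverse, given $(I_1, I_2) \in \mathcal{I}(G_1) \times \mathcal{I}(G_2)$, set $I = I_1 \cup I_2$. To see $I \in \mathcal{I}(G)$, I would observe that any potential edge inside $I$ either lies within some $V(G_i)$, contradicting the independence of $I_i$, or joins $V(G_1)$ to $V(G_2)$, which is impossible since $G_1$ and $G_2$ are distinct connected components of $G$ and hence no edge of $G$ crosses between them. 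The two constructions are manifestly mutually inverse since $V(G_1)$ and $V(G_2)$ partition $V(G)$.

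The count then follows from $|\mathcal{I}(G)| = |\mathcal{I}(G_1)| \cdot |\mathcal{I}(G_2)|$, which is exactly $\sigma_0(G) = \sigma_0(G_1)\,\sigma_0(G_2)$. Feeding this into the induction gives $\sigma_0(G) = \prod_{i=1}^{r} \sigma_0(G_i)$, as required.

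There is no real obstacle here: the proof is a routine bijection argument, and the only subtlety worth flagging explicitly is the use of the defining property of connected components, namely that no edge of $G$ has one endpoint in $V(G_i)$ and the other in $V(G_j)$ for $i \ne j$. This is precisely what ensures that an arbitrary union of independent sets, one from each component, remains independent in $G$; without it, the reverse direction of the bijection would fail.
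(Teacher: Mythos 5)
Your proof is correct. Note, however, that the paper itself does not prove this statement at all: it is recorded as a well-known fact and attributed to the literature (the ``cf.'' citation), so there is no in-paper argument to compare against. Your bijection $I \mapsto \bigl(I \cap V(G_1), I \cap V(G_2)\bigr)$ is the standard way to establish multiplicativity of $\sigma_0$ over disjoint unions, and you correctly isolate the one point that makes the inverse map work, namely that no edge of $G$ joins two distinct components, so the union of independent sets taken one from each component is again independent. The reduction of the $r$-component case to the two-component case by peeling off $G_1$ and applying induction to $\bigcup_{i=2}^{r} G_i$ is also sound, since that union is itself a disjoint union of the remaining components and hence shares no edges with $G_1$. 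In short: the argument is complete and supplies a self-contained justification for a formula the paper simply imports.
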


\subsubsection{$\sigma_1$ of $K_n$, $P_n$, $U_n$, $C_n$, and $W_n$}

Every edge of $K_n$ can only be contained in exactly one $1$-nearly independent subsets. Hence, we have
\begin{align*}
    \sigma_1(K_n) = |E(K_n)| = \binom{n}{2} = \frac{n(n-1)}{2}.
\end{align*}

%\subsubsection{$\sigma_1$ of path $P_n$}

Let $P_n = (\{v_1, \dots,v_n\},\{e_i=v_iv_{i+1}: 1\leq i\leq n-1\})$. %Finding $\sigma_1(P_n)$ is equivalent to executing the following steps: 
%\begin{itemize}
%    \item[1.] For each edge $e_i$ of $P_n$, find $\sigma_0(P_{i-2} \cup P_{n-i-2})$.
%    \item[2.] Find the sum of all the numbers obtained in Step 1.
%\end{itemize} 
Since the number of $1$-nearly independent subsets containing $e_i$ in $P_n$ is exactly $\sigma_0(P_{i-2} \cup P_{n-i-2})$, we have
\begin{align}
\label{eq:sum-equation-of-sigma-1-of-Pn}
\sigma_1(P_n) &= \displaystyle \sum\limits_{i=1}^{n-1} \sigma_0(P_{i-2} \cup P_{n-i-2}) \nonumber\\
&=\displaystyle \sum\limits_{i=1}^{n-1} \sigma_0(P_{i-2}) \sigma_0(P_{n-i-2}) & \text{ by Theorem \ref{thm:sigma-union-is-product-of-sigmas}}\nonumber\\
&= \frac{1}{5} \displaystyle \sum\limits_{i=1}^{n-1} (\alpha^i - \beta^i)(\alpha^{n-i} - \beta^{n-i}) & \text{ by Theorem \ref{thm:sigma-of-a-path-by-eric}} \nonumber\\
&= \frac{1}{5} \displaystyle \sum\limits_{i=1}^{n-1} \left[ \alpha^n - \beta^n \left(\frac{\alpha}{\beta}\right)^i - \alpha^n \left(\frac{\beta}{\alpha}  \right)^i + \beta^n \right] \nonumber \\
&=\frac{1}{5}(n-1)\left(\alpha^n + \beta^n \right) -\frac{1}{5} \left[ \beta^n \displaystyle \sum\limits_{i=1}^{n-1} \left(\frac{\alpha}{\beta}\right)^i  + \alpha^n \displaystyle \sum\limits_{i=1}^{n-1} \left(\frac{\beta}{\alpha}  \right)^i    \right].
\end{align}

%Note that $\displaystyle \sum\limits_{i=1}^{n-1} \left(\frac{\alpha}{\beta}\right)^i$ is nothing but the sum of the first $(n-1)$ terms of a geometric series with the first term $\frac{\alpha}{\beta}$ and the common ratio $\frac{\alpha}{\beta}$, so
The sum 
\begin{align*}
    \displaystyle \sum\limits_{i=1}^{n-1} \left(\frac{\alpha}{\beta}\right)^i = \frac{ \frac{\alpha}{\beta} \left[ \left( \frac{\alpha}{\beta} \right)^{n-1} -1   \right]   }{ \left( \frac{\alpha}{\beta} -1\right)},
\end{align*}
implies that 
\begin{align}
\label{eq:beta-n-sum-geometric-series}
     \beta^n \displaystyle \sum\limits_{i=1}^{n-1} \left(\frac{\alpha}{\beta}\right)^i = \alpha \beta \left( \frac{ \alpha^{n-1} - \beta^{n-1}  }{\alpha -\beta} \right) = - \frac{1}{\sqrt{5}}\left(  \alpha^{n-1} - \beta^{n-1}\right).
\end{align}

Similarly,
\begin{align*}
    \displaystyle \sum\limits_{i=1}^{n-1} \left(\frac{\beta}{\alpha}\right)^i = \frac{ \frac{\beta}{\alpha} \left[ \left( \frac{\beta}{\alpha} \right)^{n-1} -1   \right]   }{ \left( \frac{\beta}{\alpha} -1\right)},
\end{align*}
leads to
\begin{align}
\label{eq:alpha-n-sum-geometric-series}
     \alpha^n \displaystyle \sum\limits_{i=1}^{n-1} \left(\frac{\beta}{\alpha}\right)^i = \alpha \beta \left( \frac{ \alpha^{n-1} - \beta^{n-1}  }{\alpha -\beta} \right) = - \frac{1}{\sqrt{5}}\left(  \alpha^{n-1} - \beta^{n-1}\right).
\end{align}
Thus, substituting Equations \eqref{eq:beta-n-sum-geometric-series} and \eqref{eq:alpha-n-sum-geometric-series} into Equation \eqref{eq:sum-equation-of-sigma-1-of-Pn} yields
\begin{align*}
    \sigma_1(P_n) &= \frac{1}{5}(n-1)\left(\alpha^n + \beta^n \right) -\frac{1}{5} \left[- \frac{1}{\sqrt{5}}\left(  \alpha^{n-1} - \beta^{n-1}\right) - \frac{1}{\sqrt{5}}\left(  \alpha^{n-1} - \beta^{n-1}\right)    \right]\\
   &= \frac{1}{5}\left[(n-1)\left(\alpha^n + \beta^n \right) + \frac{2}{\sqrt{5}} \left( \alpha^{n-1} -\beta^{n-1} \right) \right].
\end{align*}

%\subsubsection{$\sigma_1$ of a cycle $C_n$, and a wheel $W_n$}

Let $U_n$ be the graph obtained from the tree $K_{1,n-1}$ by adding one more edge to make it a unicyclic graph. Then, clearly, $\Delta(U_n) = n-1$. Let $v$ be the vertex of maximum degree in $U_n$. Then, we have
\begin{align*}
\sigma_1(U_n)&=\sigma_1(U_n-v) + \sigma_1(U_n - N_G[v]) + \displaystyle \sum\limits_{u\in N_G(v)}\sigma_0(U_n-(N_G[u]\cup N_G[v]))\\
&=\sigma_1(P_2\cup \overline{K_{n-3}}) + \sigma_1(\emptyset) + (n-1)\sigma_0(\emptyset)\\
&=\sigma_1(P_2)\sigma_0(\overline{K_{n-3}}) + \sigma_1(\overline{K_{n-3}})\sigma_0(P_2)+\sigma_1(\emptyset) + (n-1)\sigma_0(\emptyset)\\
&=n-1+2^{n-3}.
\end{align*}

Let $C_n=(\{v_1,\dots,v_n\},\{e_i=v_iv_{i+1}:1\leq i\leq n-1\}\cup \{e_n=v_nv_1\})$. Note that if we remove consecutive vertices of $C_n$, we obtain a path of order less than $n$. 
%Finding $\sigma_1(C_n)$ is equivalent to executing the following steps:
%\begin{itemize}
%    \item[1.] For each edge $e_i$ of $C_n$, find $\sigma_0(P_{n-4})$.
%    \item[2.] Find the sum of all the numbers obtained in Step 1.
%\end{itemize}
Every edge of $C_n$ is contained in exactly $\sigma_0(P_{n-4})$ $1$-nearly independent subsets. Thus,
\begin{align*}
    \sigma_1(C_n) &= \displaystyle \sum\limits_{i=1}^n \sigma_0(P_{n-4})=n\sigma_0(P_{n-4}) = \frac{n}{\sqrt{5}}\left( \alpha^{n-2} - \beta^{n-2} \right).
\end{align*}

%\subsubsection{$\sigma_1$ of a wheel graph $W_n$}

A \emph{wheel graph} $W_n$, of order $n$ is given by $W_n = C_{n-1} + K_1$. Note that in $W_n$, a vertex $v$ is central if and only if $\deg_{W_n}v = n-1$. Let $v$ be a central vertex of $W_n$. Then
\begin{align*}
        \sigma_1(W_n) &= \sigma_1(W_n-v) + \sigma_1(W_n - N_G[v]) + \displaystyle \sum\limits_{u\in N_G(v)}\sigma_0(W_n-(N_G[u]\cup N_G[v]))\\
        &=\sigma_1(C_{n-1}) + \sigma_1(\emptyset) + (n-1)\sigma_0(\emptyset)\\
        &=\frac{1}{\sqrt{5}}(n-1)\left(\alpha^{n-3} -\beta^{n-3} \right) + (n-1)\\
        &=(n-1)\left[1 + \frac{1}{\sqrt{5}} \left( \alpha^{n-3} -\beta^{n-3} \right)  \right].
\end{align*}

\subsubsection{$\sigma_1$ of $B_n^k$, $L_n^k$, and $T_n^k$}

Let $v$ be a vertex of $B_n^k$ of maximum degree. Then $\deg_{B_n^k} v = n-k+1$, and so
\begin{align*}
    \sigma_1(B_n^k) = &\sigma_1(B_n^k-v) + \sigma_1(B_n^k - N_G[v]) + \displaystyle \sum\limits_{u\in N_G(v)}\sigma_0(B_n^k-(N_G[u]\cup N_G[v]))\\
    =&\sigma_1(P_{k-1} \cup \overline{K_{n-k}}) + \sigma_1(P_{k-2}) + \sigma_0(P_{k-3}) + (n-k)\sigma_0(P_{k-2})\\
    =&\sigma_1(P_{k-1})\sigma_0(\overline{K_{n-k}}) + \sigma_1(\overline{K_{n-k}})\sigma_0(P_{k-1}) + \sigma_1(P_{k-2}) + \sigma_0(P_{k-3}) \\
    &+ (n-k)\sigma_0(P_{k-2})\\
    =&\sigma_1(P_{k-1})\cdot 2^{n-k} + \sigma_1(P_{k-2}) + \sigma_0(P_{k-3}) + (n-k)\sigma_0(P_{k-2}).
\end{align*}

%\subsubsection{$\sigma_1$ of a lollipop graph $L_n^k$}

Let $v$ be a vertex of $L_n^k$ of maximum degree. Then $\deg_{L_n^k} v = n-k+1$, and so
\begin{align*}
	  \sigma_1(L_n^k) = &\sigma_1(L_n^k-v) + \sigma_1(L_n^k - N_G[v]) + \displaystyle \sum\limits_{u\in N_G(v)}\sigma_0(L_n^k-(N_G[u]\cup N_G[v]))\\
	  =&\sigma_1(P_{k-1}\cup K_{n-k}) + \sigma_1(P_{k-2}) + \sigma_0(P_{k-3}) + (n-k)\sigma_0(P_{k-2})\\
	  =&\sigma_1(P_{k-1})\sigma_0(K_{n-k}) + \sigma_1(K_{n-k})\sigma_0(P_{k-1}) + \sigma_1(P_{k-2}) + \sigma_0(P_{k-3}) \\
	  &+ (n-k)\sigma_0(P_{k-2})\\
	  =&(n-k+1)\sigma_1(P_{k-1}) + \binom{n-k}{2} \sigma_0(P_{k-1}) + \sigma_1(P_{k-2}) + \sigma_0(P_{k-3})\\ &+(n-k)\sigma_0(P_{k-2}).
\end{align*}

%\subsubsection{$\sigma_1$ of a tadpole $T_n^k$}

Let $v$ be a vertex of $T_n^k$ of maximum degree. Then $\deg_{T_n^k} v = 3$, and so
\begin{align*}
	\sigma_1(T_n^k) = &\sigma_1(T_n^k-v) + \sigma_1(T_n^k - N_G[v]) + \displaystyle \sum\limits_{u\in N_G(v)}\sigma_0(T_n^k-(N_G[u]\cup N_G[v]))\\
	=&\sigma_1(P_{k-1}\cup P_{n-k}) + \sigma_1(P_{k-2}\cup P_{n-k-2}) + \sigma_0(P_{k-3}\cup P_{n-k-2})\\
	&+ 2\sigma_0(P_{k-2}\cup P_{n-k-3})\\
	=&\sigma_1(P_{k-1})\sigma_0(P_{n-k}) + \sigma_1(P_{n-k})\sigma_0(P_{k-1}) + 
	\sigma_1(P_{k-2})\sigma_0(P_{n-k-2}) \\
	&+ \sigma_1(P_{n-k-2})\sigma_0(P_{k-2}) + \sigma_0(P_{k-3})\sigma_0(P_{n-k-2}) + 2\sigma_0(P_{k-2})\sigma_0(P_{n-k-3}).
\end{align*}

\section{Main result}
\label{Sec:Main}
In this section we present some bounds on the number of $1$-nearly vertex independent subsets of a graph $G$. In particular, we present a lower bound on the number of $1$-nearly vertex independent subsets of a connected graph $G$ with $m$ edges. %We also check the effect of an edge removal or a vertex removal on the number of $1$-nearly independent subsets.
Thereafter, we present an upper bound on the number of $1$-nearly vertex independent subsets of a graph $G$ that is not necessarily connected. The families of graphs that achieve equality on each of these bounds are characterised. 

%We further establish the explicit formulas for the number of $1$-nearly vertex independent subsets of some special classes of graphs. In particular, we establish the explicit formula for the number of $1$-nearly vertex independent subsets of a complete graph $K_n$, a unicyclic graph $U_n$, cycle graph $C_n$, and a wheel graph $W_n$ in terms of its order $n$. Also, for $k\ge 2$ an integer, we establish, the explicit formula for the number of $1$-nearly vertex independent subsets of a broom graph $B_n^k$, a lollipop graph $L_n^k$, and a tadpole $T_n^k$ in terms of $n$, $k$, $\sigma_1(P_s)$, and $\sigma_0(P_t)$, where $P_s$ and $P_t$ are paths of order $s$ and $t$, respectively, with $s,t<n$.

\subsection{Minimal connected graphs}
In this subsection we characterise the family of connected graphs with $m$ edges and minimum number of $1$-nearly independent vertex subsets. This will be used to determine the connected graph, and then the tree, with given order and minimum number of $1$-nearly independent vertex subsets.

\begin{defn}
\label{Def:Good}
    Let $G = (V(G), E(G))$ be a graph with vertex set $V(G)$ and edge set $E(G)$. If $e=uv \in E(G)$, then $e$ is a good edge if $N_G[u]\cup N_G[v] = V(G)$. The graph $G$ is a good graph if for every edge $e \in E(G)$, $e$ is a good edge. Let
    $$\mathcal{H} = \{ G \mid G \text{ is a good graph} \}.$$
\end{defn}
It follows from the definition that a good graph has to be connected.

\begin{thm}
    If $G$ is a connected graph of size $m$, then
    \begin{align}
        \label{if-G-is-a-connected-graph-of-order-n-size-m-then-sigma1G-atleast-m}
        \sigma_1(G) \ge m,
    \end{align}
    with equality if and only if $G \in \mathcal{H}$.
\end{thm}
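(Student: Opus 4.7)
The plan is to count $\sigma_1(G)$ by grouping each $1$-nearly independent subset according to its unique pair of adjacent vertices. Every $1$-nearly independent subset $S$ contains exactly one edge $uv$ of $G$, and any additional vertex $w \in S \setminus \{u,v\}$ must be non-adjacent to $u$, non-adjacent to $v$, and non-adjacent to every other element of $S \setminus \{u,v\}$. Equivalently, $S \setminus \{u,v\}$ is an independent set in the induced subgraph $G - (N_G[u] \cup N_G[v])$. Conversely, for each edge $uv \in E(G)$ and each independent set $I$ of $G - (N_G[u] \cup N_G[v])$, the set $\{u,v\} \cup I$ is a $1$-nearly independent subset. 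This yields the identity
\[
\sigma_1(G) \;=\; \sum_{uv \in E(G)} \sigma_0\bigl(G - (N_G[u] \cup N_G[v])\bigr),
\]
which can also be read off by summing contributions edge-by-edge in Equation~(\ref{Eq:Rec}).

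The inequality $\sigma_1(G) \ge m$ then follows immediately, since $\sigma_0(H) \ge 1$ for every graph $H$ (the empty set is always independent), and summing over the $m$ edges gives the bound. For the equality case, one needs $\sigma_0(G - (N_G[u] \cup N_G[v])) = 1$ for every edge $uv \in E(G)$. But $\sigma_0(H) = 1$ if and only if $V(H) = \emptyset$, because any nonempty graph $H$ admits at least two independent sets, $\emptyset$ and $\{w\}$ for some $w \in V(H)$. Hence equality holds exactly when $N_G[u] \cup N_G[v] = V(G)$ for every edge $uv \in E(G)$, which is precisely the condition defining $G \in \mathcal{H}$ in Definition~\ref{Def:Good}.

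There is no serious obstacle in this argument; it is essentially a short counting proof. The only step requiring a moment of care is the partition: one must verify both that each $1$-nearly independent subset is counted exactly once (which is automatic from the uniqueness of its adjacent pair) and that the leftover vertices really do form an independent set in the induced subgraph on $V(G) \setminus (N_G[u] \cup N_G[v])$, so that no accidental second adjacency sneaks in. Connectedness of $G$ plays no role in the inequality itself, but it is consistent with the equality condition, since any graph containing a good edge is automatically covered by two closed neighbourhoods and is therefore connected.
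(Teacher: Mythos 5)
Your proof is correct and follows essentially the same route as the paper: both count $1$-nearly independent subsets according to the unique edge they contain. Your exact identity $\sigma_1(G)=\sum_{uv\in E(G)}\sigma_0\bigl(G-(N_G[u]\cup N_G[v])\bigr)$ is a slightly sharper packaging of that idea---it delivers both directions of the equality characterisation at once, whereas the paper only argues explicitly that a non-good edge forces $\sigma_1(G)>m$ and leaves the converse (good implies equality) implicit.
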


\begin{proof}
Inequality (\ref{if-G-is-a-connected-graph-of-order-n-size-m-then-sigma1G-atleast-m}) follows from the fact that the two ends of any edge of a graph $G$ form a $1$-nearly independent vertex subset of $G$. If $G$ is an $(n,m)$-graph that is not good, then there is an edge $uv$ of $G$ that is not a good edge. That is, $V(G)\setminus(N_G[u]\cup N_G[v])\neq \emptyset$. In addition to the $1$-nearly independent vertex subsets made of the two ends of the edge $uv$, the subset $\{u,v,z\}$ is also a $1$-nearly independent vertex subset, for any $z\in V(G)\setminus (N_G[u]\cup N_G[v]).$ Hence, we have
$\sigma_1(G)>m$. 
\end{proof}

Definition \ref{Def:Good} already gives a full characterisation of the family $\mathcal{H}$. However, using that description, it is still hard to imagine how the structure of an element of $\mathcal{H}$ with a large number of vertices should look like. The rest of this section is a further investigation of $\mathcal{H}$ aiming to understand the structures of its elements.

%%I already proved this in the preliminary
%\begin{defn}
%    Let $G_1 = (V(G_1), E(G_1))$ and $G_2 = (V(G_2), E(G_2))$ be two graphs. The join of $G_1$ and $G_2$, denoted $G_1+G_2$, is defined by the vertex set $V(G_1+G_2) = V(G_1) \cup V(G_2)$ and the edge set $E(G_1+G_2)= E(G_1)\cup E(G_2) \cup \{ uv \mid u \in V(G_1) \text{ and } v \in V(G_2) \}$.
%\end{defn}

\begin{lem}
\label{Lem:ClosedH}
    The family $\mathcal{H}$ is closed under the join operation.
\end{lem}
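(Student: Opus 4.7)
The plan is to verify the good-graph condition directly on $G+H$ by splitting the edges of $G+H$ into three natural types: edges of $G$, edges of $H$, and the new ``cross'' edges inserted by the join. Fix $G,H\in\mathcal{H}$ and set $F=G+H$, so $V(F)=V(G)\cup V(H)$ and $E(F)=E(G)\cup E(H)\cup\{uv:u\in V(G),\, v\in V(H)\}$.

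First I would treat the cross edges, which are the easiest case and illustrate the phenomenon. If $u\in V(G)$ and $v\in V(H)$, then by definition of the join $u$ is adjacent in $F$ to every vertex of $V(H)$ and $v$ is adjacent in $F$ to every vertex of $V(G)$. Hence $N_F[u]\supseteq\{u\}\cup V(H)$ and $N_F[v]\supseteq\{v\}\cup V(G)$, so $N_F[u]\cup N_F[v]=V(F)$, and every cross edge is good regardless of whether $G$ and $H$ were good.

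Next I would handle the two symmetric cases of edges inherited from $G$ and $H$. If $uv\in E(G)$, then since $G$ is good we have $N_G[u]\cup N_G[v]=V(G)$. In $F$, the vertices $u$ and $v$ pick up every vertex of $V(H)$ as a neighbour, so $N_F[u]\cup N_F[v]\supseteq(N_G[u]\cup N_G[v])\cup V(H)=V(G)\cup V(H)=V(F)$. The argument for $uv\in E(H)$ is identical with the roles of $G$ and $H$ swapped.

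Combining the three cases shows that every edge of $F$ is good, so $F=G+H\in\mathcal{H}$. There is no real obstacle here; the only thing to be careful about is to invoke the definition of the join to ensure that adding the vertex sets of the opposite graph to each closed neighbourhood is legitimate, and that the $\mathcal{H}$-membership of $G$ and $H$ is only needed for the two ``internal'' edge types.
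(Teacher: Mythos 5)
Your proposal is correct and follows essentially the same route as the paper: a case split on whether the edge lies in $G$, in $H$, or is a cross edge of the join, using $N_{G+H}[w]=N_G[w]\cup V(H)$ for $w\in V(G)$ in each case. The only (harmless) cosmetic difference is that you note explicitly that cross edges are good without needing $G,H\in\mathcal{H}$, which the paper does not emphasize.
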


\begin{proof}
    Let $G_1 = (V(G_1), E(G_1))$ and $G_2 = (V(G_2), E(G_2))$ be two graphs in $\mathcal{H}$. Then $G_1$ and $G_2$ are good graphs.  We want to show that $G_1+G_2\in \mathcal{H}$. Since $G_1$ and $G_2$ are good graphs, we have $N_{G_1}[s] \cup N_{G_1}[t] = V(G_1)$ for all $st\in E(G_1)$, and $N_{G_2}[u] \cup N_{G_2}[v] = V(G_2)$ for all $uv\in E(G_2)$. Let $wx \in E(G_1+G_2)$. %be arbitrarily chosen. 
    Then either $wx \in E(G_1)$ or $wx \in E(G_2)$ or $w \in V(G_1)$ and $x\in V(G_2)$. If $wx \in E(G_1)$, then, since $G_1$ is a good graph, $wx$ is a good edge in $G_1$. By the definition of the join operation, $N_{G_1+G_2}[w] = N_{G_1}[w] \cup V(G_2)$ and $N_{G_1+G_2}[x] = N_{G_1}[x] \cup V(G_2)$. Thus, $$N_{G_1+G_2}[w] \cup N_{G_1+G_2}[x] = N_{G_1}[w] \cup N_{G_1}[x] \cup V(G_2) = V(G_1)\cup V(G_2) = V(G_1+G_2).$$ By the same reasoning, if $wx \in E(G_2)$, then, since $G_2$ is a good graph, $wx$ is a good edge in $G_2$, and by the definition of the join operation, $wx$ is also a good edge in $G_1+G_2$.  Hence, we assume that $w \in V(G_1)$ and $x \in V(G_2)$. By the definition of the join operation, $N_{G_1+G_2}[w] = N_{G_1}[w] \cup V(G_2)$ and $N_{G_1+G_2}[x] = N_{G_2}[x] \cup V(G_1)$. Since $N_{G_1}[w] \subseteq V(G_1)$ and $N_{G_2}[x] \subseteq V(G_2)$, we have $$N_{G_1+G_2}[w] \cup N_{G_1+G_2}[x] = V(G_1)\cup V(G_2)=V(G_1+G_2),$$ implying that $wx$ is also a good edge in $G_1+G_2$. Since $wx$ was arbitrarily chosen, the graph $G_1+G_2$ is a good graph,  and thus $G_1 + G_2 \in \mathcal{H}$. 
\end{proof}

\begin{lem}
\label{Lem:ExtendH}
    If $G \in \mathcal{H}$, then for any integer $\ell \ge 1$, $G + \overline{K_\ell} \in \mathcal{H}$, where $\overline{K_\ell}$ is an edgeless graph of order $\ell$.
\end{lem}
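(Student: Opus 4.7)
The plan is to verify the good-edge condition directly for every edge of $H := G + \overline{K_\ell}$. Since $\overline{K_\ell}$ has no internal edges, the edge set of $H$ splits as $E(H) = E(G) \cup J$, where $J = \{wx : w \in V(G),\, x \in V(\overline{K_\ell})\}$ collects the join edges. The key observation I will use throughout is that, by the definition of the join, $N_H[w] = N_G[w] \cup V(\overline{K_\ell})$ for every $w \in V(G)$, and $N_H[x] = \{x\} \cup V(G)$ for every $x \in V(\overline{K_\ell})$ (the latter because $\overline{K_\ell}$ is edgeless, so $x$ has no other neighbours inside $\overline{K_\ell}$).

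First I would handle an edge $wx \in E(G)$: since $G$ is good, $N_G[w] \cup N_G[x] = V(G)$, and adjoining $V(\overline{K_\ell})$ to both closed neighbourhoods via the join identity above gives
$N_H[w] \cup N_H[x] = V(G) \cup V(\overline{K_\ell}) = V(H)$.
Next I would handle a join edge $wx \in J$ with $w \in V(G)$ and $x \in V(\overline{K_\ell})$: here $N_H[w] \supseteq V(\overline{K_\ell})$ and $N_H[x] \supseteq V(G)$, so their union already equals $V(H)$ without invoking any property of $G$ at all. In both cases $wx$ is a good edge in $H$, which is precisely what membership in $\mathcal{H}$ requires.

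No step here is a real obstacle. The argument mirrors the proof of Lemma \ref{Lem:ClosedH} almost verbatim, with $\overline{K_\ell}$ playing the role of a vacuously good second graph (it has no edges, so the good-edge hypothesis on $\overline{K_\ell}$ is vacuous). The lemma is worth stating separately only because $\overline{K_\ell}$ with $\ell \ge 2$ is disconnected and was therefore excluded from $\mathcal{H}$ by the convention immediately following Definition \ref{Def:Good}; one cannot simply cite Lemma \ref{Lem:ClosedH} as a black box, but the verification above bypasses the connectedness issue entirely.
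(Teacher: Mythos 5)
Your proof is correct and follows essentially the same route as the paper's: the same split of $E(G+\overline{K_\ell})$ into edges of $G$ and join edges, the same closed-neighbourhood identities under the join, and the same two-case verification that every edge is good. The only addition is your closing remark explaining why the lemma cannot be obtained by citing Lemma~\ref{Lem:ClosedH} directly (since $\overline{K_\ell}$ is not a good graph for $\ell\ge 2$), which is accurate but does not change the argument.
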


\begin{proof}
    Let $G \in \mathcal{H}$. Then $G$ is a good graph, and so $N_G[u] \cup N_G[v] = V(G)$ for all $uv \in E(G)$. Let $\overline{K_\ell}$ be the empty graph of order $\ell$ with vertex set $V(\overline{K_\ell}) = \{w_1, w_2, \ldots, w_\ell\}$. By the join operation, $N_{G+\overline{K_\ell}} [w_i] = V(G)\cup \{w_i\}$ for all $i \in [\ell]$. Let $vx \in E(G+\overline{K_\ell})$. %be arbitrarily chosen. 
    Since $\overline{K_\ell}$ is an edgeless graph, $vx \notin E(\overline{K_\ell})$. If $vx \in E(G)$, then, since $G$ is a good graph, $vx$ is a good edge in $G$. By the definition of the join operation, $N_{G+\overline{K_\ell}}[v] = N_{G}[v] \cup V(\overline{K_\ell})$ and $N_{G+\overline{K_\ell}}[x] = N_{G}[x] \cup V(\overline{K_\ell})$. Thus, $$N_{G+\overline{K_\ell}}[v] \cup N_{G+\overline{K_\ell}}[x] =  N_{G}[v] \cup N_{G}[x] \cup V(\overline{K_\ell}) = V(G) \cup V(\overline{K_\ell}) = V(G + \overline{K_\ell}),$$ implying that $vx$ is also a good edge in $G+\overline{K_\ell}$. Hence, we assume that $v\in V(G)$ and $x\in V(\overline{K_\ell})$. Thus, $x=w_i$ for some $i\in [\ell]$. By the definition of the join operation, $N_{G+\overline{K_\ell}} [v] = N_G[v] \cup V(\overline{K_\ell})$. Since $N_G[v] \subseteq V(G)$, we have $$N_{G+\overline{K_\ell}} [v] \cup N_{G+\overline{K_\ell}} [x] = N_{G+\overline{K_\ell}} [v] \cup N_{G+\overline{K_\ell}} [w_i] = V(G)  \cup V(\overline{K_\ell})= V(G+\overline{K_\ell}),$$ implying that $vx = vw_i$ is a good edge in $G+\overline{K_\ell}$. Since $wx$ was arbitrarily chosen, the graph $G+\overline{K_\ell}$ is a good graph, and thus $G+\overline{K_\ell} \in \mathcal{H}$.
\end{proof}

Let $\mathcal{H}_1=\{K_1\}\cup \{K_{r,s}\mid r,s\in \mathbb{N}\}$. For any integer $k\geq 2$, we define
$$\mathcal{H}_k=\{K+H\mid K,H\in \mathcal{H}_{k-1}\}\cup \{G+\overline{K_\ell}\mid G\in \mathcal{H}_{k-1} \text{ and }\ell\in\mathbb{N} \}.$$

\begin{thm}
$$\mathcal{H}=\bigcup_{k\in\mathbb{N}}\mathcal{H}_k.$$
\end{thm}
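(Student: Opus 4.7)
The plan is to prove the two inclusions of the claimed identity separately. The direction $\bigcup_k \mathcal{H}_k \subseteq \mathcal{H}$ I would handle by induction on $k$. The base case is direct: $K_1$ is vacuously good, and any edge of $K_{r,s}$ has its two closed neighborhoods covering all of $V(K_{r,s})$. The inductive step is exactly what Lemmas \ref{Lem:ClosedH} and \ref{Lem:ExtendH} provide: every member of $\mathcal{H}_k$ is obtained from members of $\mathcal{H}_{k-1} \subseteq \mathcal{H}$ either by joining two of them or by joining one with an edgeless graph $\overline{K_\ell}$, and both operations preserve the good property.

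For the reverse inclusion $\mathcal{H} \subseteq \bigcup_k \mathcal{H}_k$, the crux is a structural characterization. I would prove that for every good graph $G$ the binary relation on $V(G)$ defined by $u \sim v$ iff $u = v$ or $uv \notin E(G)$ is an equivalence relation. Reflexivity and symmetry are immediate, so the substantive content is transitivity. Suppose $u,v,w$ are distinct with $uv, vw \notin E(G)$ and assume for contradiction that $uw \in E(G)$. Since $G$ is good, $uw$ is a good edge and so $N_G[u] \cup N_G[w] = V(G)$; but $v \notin N_G[u]$ (because $uv \notin E(G)$ and $v \neq u$) and $v \notin N_G[w]$ (because $vw \notin E(G)$ and $v \neq w$), a contradiction. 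The equivalence classes $V_1, \dots, V_t$ are therefore independent sets with every cross-pair adjacent, so $G$ is the complete multipartite graph $K_{|V_1|, \dots, |V_t|}$. The remark following Definition \ref{Def:Good} that good graphs must be connected rules out the degenerate case $t = 1$ with $n \geq 2$.

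With the multipartite characterization in hand, I would finish by induction on $n = |V(G)|$. If $n = 1$ then $G = K_1 \in \mathcal{H}_1$, and if $t = 2$ then $G = K_{|V_1|, |V_2|} \in \mathcal{H}_1$. If $t \geq 3$ I would peel off one partite set: set $G' = G - V_t$, which is complete $(t-1)$-partite and therefore still good, and observe that $G = G' + \overline{K_{|V_t|}}$. The inductive hypothesis places $G'$ in some $\mathcal{H}_{k'}$, and then the second clause in the definition of $\mathcal{H}_{k'+1}$ puts $G$ in $\mathcal{H}_{k'+1} \subseteq \bigcup_k \mathcal{H}_k$. The main obstacle is the structural characterization itself; once the equivalence-relation argument is in hand, the recursion unfolds cleanly. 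One incidental observation is that the first clause in the definition of $\mathcal{H}_k$ (joining two members of $\mathcal{H}_{k-1}$) is not actually required for this direction, since the second clause together with the base $\mathcal{H}_1$ already reaches every complete multipartite graph.
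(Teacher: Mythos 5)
Your proof is correct, but it takes a genuinely different route from the paper's. The paper proves the reverse inclusion by an extremal argument: it picks an induced subgraph $H=K+L$ of $G$ that is join-decomposable and has the largest number of vertices, uses goodness of the edges incident to a vertex outside $H$ to absorb that vertex into $K$ or $L$ (contradicting maximality), concludes $H=G$, and then recurses on $K$ and $L$ after checking each is edgeless or good. You instead prove a clean structural characterization up front: in a good graph the relation ``equal or non-adjacent'' is transitive (your $N_G[u]\cup N_G[w]=V(G)$ argument is exactly the observation that goodness forbids an induced $P_3$ in the complement), so every good graph is complete multipartite, and the reverse inclusion follows by peeling off one partite set at a time via $G=G'+\overline{K_{|V_t|}}$. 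Your version buys more: it identifies $\mathcal{H}$ explicitly as $\{K_1\}\cup\{\text{complete multipartite graphs}\}$, which is precisely the structural insight the paper says it is after when it remarks that Definition \ref{Def:Good} makes it ``hard to imagine'' what large members of $\mathcal{H}$ look like, and your incidental observation that the join clause $K+H$ in the definition of $\mathcal{H}_k$ is redundant for generating $\mathcal{H}$ is a correct and worthwhile by-product. One caveat you already flag, which is really a wrinkle in the paper's definition rather than in your argument: $\overline{K_n}$ with $n\ge 2$ is vacuously good under the literal definition, and both your proof and the paper's rely on the (asserted, and only valid for graphs with at least one edge) claim that good graphs are connected to exclude it; you handle this the same way the paper does, by citing that remark.
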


\begin{proof}
It is easy to check that $\mathcal{H}_1\subseteq \mathcal{H}$. Also, by Lemma \ref{Lem:ClosedH} and Lemma \ref{Lem:ExtendH}, it follows that $\bigcup_{k\in\mathbb{N}}\mathcal{H}_k\subseteq \mathcal{H}$. Thus, it remains for us to prove the reverse inclusion. Suppose that $G$ is a good graph. Then $G$ is connected. If $G$ has only one vertex, then $G \in \mathcal{H}_1 \subseteq \mathcal{H}$. Hence, we may assume that $G$ has more than one vertex. %If $G$ has more than one vertex, 
Thus $G$ has at least one edge. We now consider the subgraph $H$ of $G$ with largest number of vertices, and such that $H=K+L$ for some induced subgraphs $K$ and $L$ of $G$. Suppose that $H$ is not $G$. Since $G$ is connected, there is a vertex $u$ in $G-H$ that is adjacent to a vertex $v$ in $H$. Without loss of generality, we can assume that $v\in V(K)$. Then, the edge $uv$ has to be a good edge; that is, $V(H) = V(K)\cup V(L)\subseteq N_G[u]\cup N_G[v]=V(G)$.

If there is a vertex $w\in V(K)$ that is not adjacent to $u$, then every vertex in $L$ has to be adjacent to $u$ (because every edge $ws$ for any $s\in V(L)$ has to be a good edge). In this case, we could add $u$ to $K$ and obtain a bigger subgraph of $G$, namely $H'=\langle V(K)\cup\{u\}\rangle_G+L$. However, this contradicts our choice of $H$. Hence, we may assume that all the vertices in $K$ are adjacent to $u$. In this case, we can again add $u$ to $L$ and have a bigger subgraph of $G$, namely $H''=K+\langle V(L)\cup\{u\}\rangle_G$. Once again, this is a contradiction to our choice of $H$. Hence, we must have $V(H)=V(G)$.

It is only left to prove that each of $K$ and $L$ is either an edgeless graph or a good graph.
It is sufficient to prove that if $K$ (or $L$) has an edge then it is a good graph. Suppose that $xy$ is an edge in $K$. Then it has to be a good edge in $G$; that is, $N_G[x]\cup N_G[y]=V(G)$. Thus, $N_K[x]\cup N_K[y]= (N_G[x]\cup N_G[y])\cap V(K)=V(G)\cap V(K)=V(K)$, implying that $xy$  also a good edge in $K$. 
\end{proof}

Since the star $K_{1,n-1}$ is the element of $\mathcal{H}$ with fewest edges among all elements of $\mathcal{H}$ with $n$ vertices, and it is a tree, we obtain the following corollaries:

\begin{cor}
    \label{among-all-connected-graphs-of-order-n-the-star-has-the-smallest-sigma-1}
    If $G$ is a connected graph of order $n$, then 
    \begin{align}
    %\label{sigm-1-atleast-n-minus-1}
        \sigma_1(G) \ge n-1,
    \end{align}
    with equality if and only if $G \cong K_{1, n-1}$.
\end{cor}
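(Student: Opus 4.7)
The plan is to read this corollary as an immediate consequence of the preceding theorem $\sigma_1(G)\ge m$, combined with the elementary fact that a connected graph of order $n$ has at least $n-1$ edges. Applying the theorem, since $G$ is connected we have $m\ge n-1$, hence $\sigma_1(G)\ge m\ge n-1$, which gives the stated inequality.

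For the equality case, I would argue by squeezing: if $\sigma_1(G)=n-1$, then $n-1=\sigma_1(G)\ge m\ge n-1$ forces both $m=n-1$ (so $G$ is a tree) and $\sigma_1(G)=m$ (so $G\in\mathcal{H}$ by the equality case of the theorem). Conversely, $K_{1,n-1}$ has exactly $n-1$ edges and lies in $\mathcal{H}$, because the center together with any leaf already covers $V(K_{1,n-1})$; therefore $\sigma_1(K_{1,n-1})=n-1$. So it only remains to identify the trees that belong to $\mathcal{H}$.

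I would establish that the only tree in $\mathcal{H}$ is the star by a short diameter argument. Suppose $T\in\mathcal{H}$ is a tree with $\diam(T)\ge 3$, and let $u_0u_1u_2u_3$ be a geodesic in $T$. Since $T$ is acyclic we cannot have $u_3\in N_T(u_0)$, and since $d_T(u_0,u_3)=3$ we cannot have $u_3\in N_T(u_1)$ either; therefore $u_3\notin N_T[u_0]\cup N_T[u_1]$, contradicting that the edge $u_0u_1$ is good. Hence $\diam(T)\le 2$, which for a tree on $n\ge 2$ vertices is equivalent to $T\cong K_{1,n-1}$ (the case $n=1$ is trivial). There is no serious obstacle here: the technical content is already absorbed into the theorem and the definition of $\mathcal{H}$, and what remains is precisely the elementary diameter bound identifying the extremal tree.
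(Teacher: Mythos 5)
Your proposal is correct and follows essentially the same route as the paper: the inequality is the theorem $\sigma_1(G)\ge m$ combined with $m\ge n-1$ for connected graphs, and equality squeezes to $m=n-1$ together with $G\in\mathcal{H}$. The one place you go beyond the paper is the characterisation of the extremal graph: the paper simply asserts that $K_{1,n-1}$ is \emph{the} element of $\mathcal{H}$ with fewest edges among $n$-vertex members, whereas your diameter argument (any good graph has diameter at most $2$, and the only trees of diameter at most $2$ are stars) actually proves that the star is the unique tree in $\mathcal{H}$, which is the fact the corollary's equality case rests on; this is a welcome addition rather than a deviation.
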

\begin{cor}
    \label{Cor:Tree}
    Among all trees $T$ of order $n$, we have
    \begin{align}
    %\label{sigm-1-atleast-n-minus-1}
        \sigma_1(T) \ge n-1,
    \end{align}
    with equality if and only if $G \cong K_{1, n-1}$.
\end{cor}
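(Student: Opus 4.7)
The plan is to derive this corollary almost immediately from Corollary \ref{among-all-connected-graphs-of-order-n-the-star-has-the-smallest-sigma-1}. Every tree $T$ of order $n$ is, by definition, a connected graph of order $n$, so the inequality $\sigma_1(T)\geq n-1$ is a direct specialisation of Corollary \ref{among-all-connected-graphs-of-order-n-the-star-has-the-smallest-sigma-1} to the subclass of trees. The equality statement also transfers without loss: Corollary \ref{among-all-connected-graphs-of-order-n-the-star-has-the-smallest-sigma-1} characterises the extremal graph as $K_{1,n-1}$, which itself is a tree on $n$ vertices, so the unique connected extremiser actually lies in the class of trees.

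Thus, the proof reduces to a one-line observation: restrict the quantifier in Corollary \ref{among-all-connected-graphs-of-order-n-the-star-has-the-smallest-sigma-1} from ``connected graphs of order $n$'' to ``trees of order $n$''. Since the inequality is preserved under passing to a subclass and since the (unique) graph attaining equality is already inside this subclass, both assertions follow with no extra work. If one preferred a direct argument bypassing Corollary \ref{among-all-connected-graphs-of-order-n-the-star-has-the-smallest-sigma-1}, one could apply the main theorem of the subsection with $m=n-1$ (since every tree on $n$ vertices has exactly $n-1$ edges) and then check that the only tree in $\mathcal{H}$ is $K_{1,n-1}$; this check is quick because any tree $T\not\cong K_{1,n-1}$ contains a leaf edge $uv$ with $u$ a leaf and $v$ of degree less than $n-1$, and then any vertex at distance $\geq 3$ from $u$ witnesses $V(T)\setminus (N_T[u]\cup N_T[v])\neq\emptyset$.

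There is no real obstacle here: the corollary is the specialisation of a previously proved statement to a subfamily containing the extremiser. The only thing that could go wrong would be if the extremal graph from the broader class failed to lie in the smaller class, but that does not happen here since $K_{1,n-1}$ is a tree.
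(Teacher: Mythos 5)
Your proposal is correct and matches the paper's own (implicit) derivation: the paper obtains Corollary \ref{Cor:Tree} in the same breath as Corollary \ref{among-all-connected-graphs-of-order-n-the-star-has-the-smallest-sigma-1}, precisely because $K_{1,n-1}$ is both the unique extremal connected graph and a tree, so the bound and its equality case restrict to the subclass of trees without further work. Your optional direct check that no tree other than the star is a good graph is also sound and consistent with the paper's remark that the star is the fewest-edge member of $\mathcal{H}$ on $n$ vertices.
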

Note that $K_{1,n-1}$ has the largest value if one counts the number of independent vertex subsets instead.

%%%%%%%%%%%%%%%%%%%%%%%%%%%%%%%%%%

\subsection{Maximal graphs}

In this subsection we characterise the family of graphs with $n$ vertices and maximum number of $1$-nearly independent vertex subsets. %This will be used to determine the graphs, with given order and maximum number of $1$-nearly independent vertex subsets.

The edgeless graph $\overline{K_n}$, that has the maximum $\sigma_0$ among all graphs of order $n$, has the minimum $\sigma_1$ as $\sigma_1(\overline{K_n})=0$. We observe that for $n\geq 7$, $\sigma_1(U_n) =2^{n-3}+n-1> n(n-1)/2=\sigma_1(K_n)$. So, $K_n$ is neither be a minimal graph nor a maximal graph in this case.

\begin{thm}
\label{if-G-is-a-graph-of-order-n-6-then-sigma-1-at-most-max}
    If $G$ is a graph of order $n \ge 6$, then
    \begin{align*}
        \sigma_1(G) \le \frac{27}{64}\cdot 2^n,
    \end{align*}
    with equality if and only if $G \cong 3K_2 \cup (n-6)K_1$ or $G \cong 4K_2 \cup (n-8)K_1$.
\end{thm}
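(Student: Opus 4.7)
The plan is to proceed by strong induction on $n$. The base cases $n\in\{6,7,8\}$ are verified by a finite check, using the identity $\sigma_1(G_1\cup G_2)=\sigma_1(G_1)\sigma_0(G_2)+\sigma_0(G_1)\sigma_1(G_2)$ to reduce the verification to computations on the short list of connected graphs of order at most $8$. For the inductive step with $n\ge 9$, assume the theorem holds for all $n'$ with $6\le n'<n$. If $G$ has an isolated vertex $v$, then $\sigma_1(G)=2\sigma_1(G-v)$, and the inductive hypothesis applied to $G-v$ (of order $n-1\ge 8$) gives $\sigma_1(G)\le \tfrac{27}{64}\cdot 2^n$ with equality iff $G-v\in\{3K_2\cup(n-7)K_1,\,4K_2\cup(n-9)K_1\}$; these two cases correspond exactly to the listed extremal forms for $G$.

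The main obstacle is to establish the strict inequality $\sigma_1(G)<\tfrac{27}{64}\cdot 2^n$ when $G$ has no isolated vertex. My strategy is a component-replacement argument. Given a component $C$ of $G$ with $|V(C)|\ge 3$, let $C'=\lfloor|V(C)|/2\rfloor K_2$ (together with an isolated vertex if $|V(C)|$ is odd), and form $G'$ by replacing $C$ with $C'$. Writing $R$ for the union of the remaining components and using the shorthand $g(H):=\sigma_0(H)/2^{|V(H)|}$, $h(H):=\sigma_1(H)/\sigma_0(H)$, and $f(H):=\sigma_1(H)/2^{|V(H)|}$, the component-product identity yields
\[
\frac{\sigma_1(G')-\sigma_1(G)}{2^n}=g(R)\bigl(\alpha_C+\beta_C\, h(R)\bigr),
\]
where $\alpha_C=f(C')-f(C)$ and $\beta_C=g(C')-g(C)$ depend only on $C$. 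A direct computation for each small connected $C$ shows $\alpha_C,\beta_C\ge 0$ in every case except $C=K_3$ (where $\alpha_{K_3}=-1/8$) and the stars $K_{1,k}$ with $k\ge 5$ (where $\beta_{K_{1,k}}<0$). These exceptional component types are handled separately: when $G$ contains such a component, either $h(R)$ is in the correct range so that the replacement is still non-decreasing, or the structure of $R$ is sufficiently constrained that a direct analysis bounds $\sigma_1(G)/2^n$ strictly below $27/64$.

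Iterating the replacements transforms $G$ into a graph $G^*=kK_2\cup sK_1$ with $\sigma_1(G^*)\ge\sigma_1(G)$, and a direct calculation yields $\sigma_1(G^*)/2^n=(k/3)(3/4)^k$, which is maximized over nonnegative integers $k$ only at $k\in\{3,4\}$, both giving the value $27/64$. Under the no-isolated-vertex hypothesis, either $G=G^*=kK_2$ with $k\ge 5$, giving $\sigma_1(G)/2^n<27/64$ directly, or at least one of the replacement steps produced a strict increase, yielding $\sigma_1(G)<\sigma_1(G^*)\le\tfrac{27}{64}\cdot 2^n$. This completes the induction.
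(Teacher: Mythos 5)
Your component-replacement identity is correct, and the reduction to $kK_2\cup sK_1$ with $(k/3)(3/4)^k$ maximized at $k\in\{3,4\}$ is a sensible target. But the lemma on which the whole inductive step rests --- that $\alpha_C\ge 0$ and $\beta_C\ge 0$ for every connected component $C$ except $K_3$ and the stars $K_{1,k}$, $k\ge 5$ --- is both unproven and false. A component can have up to $n$ vertices, so ``a direct computation for each small connected $C$'' cannot establish the claim; indeed $\alpha_C\ge 0$ alone asserts $\sigma_1(C)\le\sigma_1\bigl(\lfloor k/2\rfloor K_2\cup(k\bmod 2)K_1\bigr)$ for all connected $C$ of order $k$, which is a statement at least as strong as the connected case of the theorem and is nowhere argued. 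Worse, the classification of exceptions is wrong: for the broom $B_8^3$ (five leaves attached to an end of $P_3$) one has $\sigma_0(B_8^3)=98>81=\sigma_0(4K_2)$, so $\beta_{B_8^3}<0$; more generally any connected graph of order $k$ with independence number at least $k-2$ (all double stars, all brooms, and many others) has $\sigma_0(C)\ge 2^{k-2}>3^{k/2}$ once $k\ge 10$, giving $\beta_C<0$. For such a component the sign of $\alpha_C+\beta_C h(R)$ flips when $h(R)$ is large (e.g.\ $h(kK_2)=k/3$ is unbounded), so the replacement can genuinely decrease $\sigma_1$ --- with $C=B_8^3$ one computes $\alpha_C=65/256$, $\beta_C=-17/256$, and the replacement is strictly decreasing already for $R=12K_2$. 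At that point the chain $\sigma_1(G)\le\sigma_1(G^*)$ breaks, and the promise that exceptional types are ``handled separately'' is a placeholder, not an argument. The equality analysis (that $G\ne G^*$ forces a strict increase somewhere) is likewise asserted without proof.

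For comparison, the paper avoids classifying components entirely: it applies the vertex recursion $\sigma_1(G)=\sigma_1(G-v)+\sigma_1(G-N[v])+\sum_{u\in N(v)}\sigma_0(G-(N[u]\cup N[v]))$ to a maximum-degree vertex, lets the inductive hypothesis bound $\sigma_1(G-v)$ and $\sigma_1(G-N[v])$, and disposes of $\Delta\ge 4$ and of connected graphs with $\Delta\in\{2,3\}$ by direct estimates; component replacement is used only in the narrow situation where a component contains $3K_2$, where the inductive hypothesis (rather than an unproved connected-graph inequality) certifies that the replacement does not decrease $\sigma_1$. If you want to keep your route, you must (i) prove $\sigma_1(C)\le\sigma_1(\lfloor k/2\rfloor K_2\cup\cdots)$ for all connected $C$, and (ii) give a genuine argument for every component with $\beta_C<0$, a family that is infinite and much larger than the stars.
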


\begin{proof}
Note that for $n\geq 8$, we have
$$
\sigma_1(3K_2\cup (n-6)K_1)=3\cdot 3^22^{n-6}=\frac{27}{64}\cdot 2^n=4\cdot 3^32^{n-8}=\sigma_1(4K_2\cup (n-8)K_1).
$$
    We use induction on the order $n$ of a graph $G$. If $n= 6$, then, by Table \ref{sigma-1-in-graphs-of-order-6}, $\sigma_1(G) \le 27 =  \frac{27}{64}\cdot 2^n$, where $3K_2$ is the only one that reaches $27$. this proves the base case. Now we may assume that the result holds for all graphs $G'$ of order $n<k$, where $k \ge 7$. Let $G$ be a graph of order $n=k$ and maximum $\sigma_1$.  We proceed with the following series of claims.

\begin{claim}
\label{if-G-contains-an-isolated-vertex-then-sigma-1-atmost-max}
    If $G$ contains an isolated vertex, then $\sigma_1(G) \le \frac{27}{64}\cdot 2^n$, with equality only if $G\in \{3K_2\cup (n-6)K_1,4K_2\cup (n-8)K_1\}$.
\end{claim}

\begin{proof}
    Suppose that $G$ contains an isolated vertex $v$. Then 
\begin{align*}
    \sigma_1(G) &= 2\sigma_1(G-v)\le 2 \left(\frac{27}{64}\cdot 2^{n-1}\right)= \frac{27}{64}\cdot 2^n.
\end{align*}
 The equality only holds if $G-v\in\{3K_2\cup ((n-1)-6)K_1),\sigma_1(4K_2\cup ((n-1)-8)K_1)\}$, in which case $G\in\{3K_2\cup (n-6)K_1),\sigma_1(4K_2\cup (n-8)K_1)\}$.
 This completes the proof of Claim \ref{if-G-contains-an-isolated-vertex-then-sigma-1-atmost-max}.
\end{proof}

Thus, by Claim \ref{if-G-contains-an-isolated-vertex-then-sigma-1-atmost-max}, we may assume that $G$ does not contain an isolated vertex.

\begin{claim}
\label{if-max-degree-one-then-sigma-1-is-atmost-max}
    If $\Delta(G) = 1$, then $\sigma_1(G) \le  \frac{27}{64}\cdot 2^n$, where equality only holds if $n\in\{6,8\}$.
\end{claim}

\begin{proof}
    Suppose $\Delta(G) = 1$. Then, since $G$ does not contain any isolated vertex, $G \cong \left(\frac{n}{2} \right)K_2$, and we have $\sigma_1(G) = \left(\frac{n}{2}\right) (3)^{\frac{n}{2} -1} = \left(\frac{n}{6}\right) (\sqrt{3})^n$. Consider the function 
\begin{align*}
    f(n) = \frac{1}{2^n} \left( \sigma_1(G) -  \frac{27}{64}\cdot 2^n \right) = \left(\frac{n}{6}\right) \left(\frac{\sqrt{3}}{2}\right)^n - \frac{27}{64}.
\end{align*}
Then
\begin{align*}
    f'(n) = \frac{1}{6}\left(\frac{\sqrt{3}}{2}\right)^n + \left(\frac{n}{6}\right) \left(\frac{\sqrt{3}}{2}\right)^n \ln\left(\frac{\sqrt{3}}{2}\right) = \frac{1}{6} \left( 1 + n\ln\left(\frac{\sqrt{3}}{2}\right) \right) \left(\frac{\sqrt{3}}{2}\right)^n.
\end{align*}
Since $\ln\left(\frac{\sqrt{3}}{2}\right) < -0.14$, we have $n\ln\left(\frac{\sqrt{3}}{2}\right) < -1$ for all $n\ge 10$, and so $f'(n) < 0$ for all $n\ge 10$. Therefore, for all $n\ge 10$, we have $f(n) < f(10) < -0.1 \le 0$. We deduce, therefore, that for $n\ge 10$, if $G$ is a graph of order $n$ with no isolated vertices, and with $\Delta(G) =1$, then   $\sigma_1(G) <  \frac{27}{64}\cdot 2^n$.
%Strict equality is needed.
\end{proof}

By Claim \ref{if-G-contains-an-isolated-vertex-then-sigma-1-atmost-max} and Claim \ref{if-max-degree-one-then-sigma-1-is-atmost-max}, we may assume that $G$ does not contain an isolated vertex and $\Delta(G) \ge 2$. Let $v$ be a vertex of $G$ with maximum degree $q\ge 2$.

\begin{claim}
    \label{if-q-at-least-4-then-sigma-1-at-most-max}
    If $q \ge 4$, then  $\sigma_1(G) <  \frac{27}{64}\cdot 2^n$.
\end{claim}

\begin{proof}
    Suppose $q\ge 4$, and let $v$ be a vertex of $G$ with maximum degree $q$. Thus,
\begin{align*}
    \sigma_1(G) &= \sigma_1(G-v) + \sigma_1(G - N_G[v]) + \displaystyle \sum\limits_{u\in N_G(v)}\sigma_0(G-(N_G[u]\cup N_G[v]))\\
    &\le \frac{27}{64}\cdot 2^{n-1} + \frac{27}{64}\cdot 2^{n-1-q} + q\cdot 2^{n-1-q}\\
    &=\frac{27}{64}\cdot 2^n \left( \frac{1}{2} + \frac{1}{2^{q+1}} \left( 1 + \frac{64}{27}q \right)  \right).
\end{align*}
Now it remains for us to show that for all $q \in \mathbb{Z}$ such that $q \ge 4$, $\frac{1}{2^{q+1}} \left( 1 + \frac{64}{27}q \right) < \frac{1}{2}$. We use induction on $q\ge 4$. If $q=4$, then $\frac{1}{2^{4+1}} \left( 1+ \frac{64}{27}(4) \right) = 0.327 < \frac{1}{2}$, thereby proving the base case. Assume that the result holds for $q = k \ge 4$. That is, assume that $\frac{1}{2^{k+1}} \left( 1 + \frac{64}{27}k \right) < \frac{1}{2}$. Thus, if $q=k+1$, we have
\begin{align*}
    \frac{1}{2^{k+1+1}} \left( 1 + \frac{64}{27}(k+1) \right) &= \frac{1}{2} \cdot \frac{1}{2^{k+1}} \left( 1 + \frac{64}{27}k + \frac{64}{27} \right)\\
    &= \frac{1}{2} \cdot \frac{1}{2^{k+1}} \left( 1 + \frac{64}{27}k\right)  +  \frac{64}{54} \cdot \frac{1}{2^{k+1}}.\\
    &\le \frac{1}{2}\cdot \frac{1}{2} + \frac{1}{27} & \text{since } k \ge 4\\
    &=\frac{31}{108}< \frac{1}{2}.
\end{align*}
Thus, by mathematical induction, $\frac{1}{2^{q+1}} \left( 1 + \frac{64}{27}q \right) < \frac{1}{2}$ for all $q \in \mathbb{Z}$ such that $q \ge 4$.

Since $\frac{1}{2^{q+1}} \left( 1 + \frac{64}{27}q \right) < \frac{1}{2}$ for all $q \in \mathbb{Z}$ such that $q \ge 4$, we deduce, therefore, that if $\Delta(G) = q \ge 4$, then  $\sigma_1(G) <  \frac{27}{64}\cdot 2^n$.
\end{proof}

By Claim \ref{if-q-at-least-4-then-sigma-1-at-most-max} (as well as Claim \ref{if-G-contains-an-isolated-vertex-then-sigma-1-atmost-max} and Claim \ref{if-max-degree-one-then-sigma-1-is-atmost-max}), we may assume that $2\le q \le 3$.

\begin{claim}
\label{if-q-2-or-q-3-and-G-connected-then-sigma1-max}
    If $G$ is a connected graph with $\Delta(G)=q$, where $2\le q \le 3$, then $\sigma_1(G) < \frac{27}{64}\cdot 2^n$.
\end{claim}

\begin{proof}
    Let $G$ be a connected graph of order $n\ge 7$ with $\Delta(G)=q$, where $2\le q\le 3$. If $q=2$, then $G \cong P_n$ or $G \cong C_n$. Let $v$ be a central vertex in $G$. Then $\deg_Gv =2$, and we have
\begin{align*}
    \sigma_1(G) &= \sigma_1(G-v) + \sigma_1(G - N_G[v]) + \displaystyle \sum\limits_{u\in N_G(v)}\sigma_0(G-(N_G[u]\cup N_G[v]))\\
    &\le \frac{27}{64}\cdot 2^{n-1} + \frac{27}{64}\cdot 2^{n-3} + 2(2^{n-4})\\
    &\le \frac{27}{64}\cdot 2^{n}\left( \frac{1}{2} + \frac{1}{8} + \frac{64}{27}\cdot \frac{1}{8}  \right) <\frac{27}{64}\cdot 2^n.
\end{align*}
Hence, we may assume that $q=3$. Let $v$ be a vertex of degree $3$ in $G$.  Since $n\ge 7$, there must exist at least one neighbour of $v$ having degree at least $2$. Thus, we have
\begin{align*}
    \sigma_1(G) &= \sigma_1(G-v) + \sigma_1(G - N_G[v]) + \displaystyle \sum\limits_{u\in N_G(v)}\sigma_0(G-(N_G[u]\cup N_G[v]))\\
    &\le \frac{27}{64}\cdot 2^{n-1} + \frac{27}{64}\cdot 2^{n-4} + 2(2^{n-4}) + 2^{n-5}\\
    &\le \frac{27}{64}\cdot 2^{n}\left( \frac{1}{2} + \frac{1}{16} + \frac{64}{27}\cdot \frac{1}{8} + \frac{64}{27}\cdot \frac{1}{32} \right) <\frac{27}{64}\cdot 2^n.
\end{align*}
This completes the proof of Claim \ref{if-q-2-or-q-3-and-G-connected-then-sigma1-max}.
\end{proof}

Hence, by Claim \ref{if-q-2-or-q-3-and-G-connected-then-sigma1-max}, we may assume that $2 \le q \le 3$ and $G$ has at least two components.

\begin{claim}
\label{if-G'-is-a-component-then-it-must-be-3K_2-free}
    If $G'$ is a component of $G$, then neither $G'$ nor $G-G'$ contains $3K_2$, otherwise $\sigma_1(G)$ does not reach the maximum value.
\end{claim}

\begin{proof}
    Suppose that $G'$ is a component of $G$ with $t$ vertices, and one of $G'$ and $G-G'$, say $G'$, contains $3K_2$. Remove edges from $G'$ to make it a $3K_3\cup (|V(G)|-6)K_1$. By the inductive hypothesis (and the the fact that removing any edge increases $\sigma_0$), we have $\sigma_1(G')\leq \sigma_1(3K_3\cup (t-6)K_1)$ and $\sigma_0(G')<\sigma_0(3K_3\cup (t-6)K_1)$. Thus,
    \begin{align*}
    \sigma_1(G) &=\sigma_1(G')\sigma_0(G-G')+\sigma_0(G')\sigma_1(G-G')\\
    &<\sigma_1(3K_3\cup (t-6)K_1)\sigma_0(G-G')+\sigma_0(3K_3\cup (t-6)K_1)\sigma_1(G-G')=\sigma_1(G''),
    \end{align*}
    where $G''$ is obtained from $G$ by replacing $G'$ by $3K_3\cup (t-6)K_1$. 
\end{proof}

Therefore, Claim \ref{if-G'-is-a-component-then-it-must-be-3K_2-free} (combined with the fact that $G$ cannot contain an isolated vertex) implies that $G$ has at most three connected components, otherwise three of the component would contain $3K_2$).

Suppose that $G$ has three components, namely $G_1$, $G_2$ and $G_3$. Then, in each of the components, any pair of edges are adjacent (otherwise two of the components would have a $3K_2$). This implies that, for $i\in [3]$, $G_i$ is a star or a complete graph $K_3$. Let $G_1$ be the largest component of $G$. Then $G_1$ has at least $3$ vertices, since the order of $G$ is at least $7$.
If $G_1$ is a star, let $v$ be a vertex of degree $1$ in $G_1$ attached at a vertex $u$ of degree $q$, where $2\le q\le 3$. Then $G-G_1$ contains $2K_2$ and thus 
\begin{align*}
\sigma_1(G) &= \sigma_1(G-v) + \sigma_1(G - N_G[v]) + \displaystyle \sum\limits_{u\in N_G(v)}\sigma_0(G-(N_G[u]\cup N_G[v]))\\
&=\sigma_1(G-v)+\sigma_1(G-v-u)+\sigma_0(G-v-N[u])\\
&\le \frac{27}{64}\cdot 2^{n-1} + \frac{27}{64}\cdot 2^{n-2} + 3^2\cdot 2^{n-1-q-4}\\
&=\frac{27}{64}\cdot 2^n \left(\frac{1}{2} + \frac{1}{4} + \frac{64}{27}\cdot 9 \cdot \frac{1}{2^{5+q}} \right)<\frac{27}{64}\cdot 2^n, & \text{ since } q \ge 2.
\end{align*}
So, $G$ would not have maximum $\sigma_1$ in this case. 

If $G_1$ is a complete graph $K_3$ with vertices $v,u$ and $z$, then we %consider a vertex $v$ of $G_1$ and 
have
\begin{align*}
\sigma_1(G) &= \sigma_1(G-v) + \sigma_1(G - N_G[v]) + \displaystyle \sum\limits_{u\in N_G(v)}\sigma_0(G-(N_G[u]\cup N_G[v]))\\
&=\sigma_1(G-v)+\sigma_1(G-v-u-z)+2\sigma_0(G-v-u-z)\\
&\le \frac{27}{64}\cdot 2^{n-1} + \frac{27}{64}\cdot 2^{n-3} + 2\cdot 3^2 \cdot 2^{n-1-2-4}\\
&=\frac{27}{64}\cdot 2^n \left( \frac{1}{2} + \frac{1}{8} + \frac{1}{3}\right)<\frac{27}{64}\cdot 2^n.
\end{align*}
Again, $G$  does not have the maximum $\sigma_1$.

Suppose that $G$ has two components, namely $G_1$ and $G_2$. Then, neither of them is a $K_1$ nor contains $3K_2$. %Let $G_1$ be the largest component of $G$. Then, $G_1$ has at least $4$ vertices, since the order of $G$ is at least $7$.
Suppose that $G_1$ has a vertex with degree $\Delta(G)$. Suppose that $\Delta(G) = q =3$. %If $G_1$ has a vertex $v$ of degree $q=3$, then 
Since $G-G_1$ contains $K_2$, we have 
\begin{align*}
\sigma_1(G)
\sigma_1(G) &= \sigma_1(G-v) + \sigma_1(G - N_G[v]) + \displaystyle \sum\limits_{u\in N_G(v)}\sigma_0(G-(N_G[u]\cup N_G[v]))\\
&\le \frac{27}{64}\cdot 2^{n-1} + \frac{27}{64}\cdot 2^{n-1-3} + 3\cdot 3 \cdot 2^{n-1-3-2}\\
&= \frac{27}{64}\cdot 2^n \left( \frac{1}{2} + \frac{1}{16} + \frac{1}{3} \right)<\frac{27}{64}\cdot 2^n.
\end{align*}
Again, $G$  does not have the maximum $\sigma_1$.

Hence, we may assume that $\Delta(G) = q = 2$. Then, each component $G_i$, of $G$ (for $i\in [2]$) is a path or a cycle. Since $G_i$ cannot contain $3K_2$, we deduce, therefore, that the order of $G_i$ is at most $5$, and hence the order of $G = G_1\cup G_2$ is at most $10$. Suppose one component, say $G_1$, is a path, and let $v$ be an end-vertex of $G_1$. Then, $G-G_1$ contains $K_2$, and we have 
\begin{align*}
    \sigma_1(G) &= \sigma_1(G-v) + \sigma_1(G - N_G[v]) + \displaystyle \sum\limits_{u\in N_G(v)}\sigma_0(G-(N_G[u]\cup N_G[v]))\\
    &\le \frac{27}{64}\cdot 2^{n-1} + \frac{27}{64}\cdot 2^{n-2} + 3\cdot 2^{n-1-1-2}\\
    &=\frac{27}{64}\cdot 2^n \left( \frac{1}{2} + \frac{1}{4} + \frac{2}{9} \right)<\frac{27}{64}\cdot 2^n.
\end{align*}
Hence, we may assume that every component $G_i$, for $i \in [2]$, is a cycle. We, therefore, compute $\sigma_1(G)$ exhaustively as follows:
\begin{itemize}
    \item For $n=7$ we have
$
\sigma_1(C_3\cup C_4)=37<54=\frac{27}{64}\cdot 2^7.
$
\item For $n=8$ we have
$
\sigma_1(C_4\cup C_4)=56 < \sigma_1(C_3\cup C_5)=85<108=\frac{27}{64}\cdot 2^8.
$
\item For $n=9$ we have
$
\sigma_1(C_4\cup C_5)=130 < \sigma_1(C_3\cup C_6) = 165 < 216 = \frac{27}{64}\cdot 2^9.
$
\item For $n=10$ we have
$
\sigma_1(C_4\cup C_6) = 250 < \sigma_1(C_5\cup C_5)=300 =260<432=\frac{27}{64}\cdot 2^{10}.
$
\end{itemize}

This completes the proof of Theorem \ref{if-G-is-a-graph-of-order-n-6-then-sigma-1-at-most-max}.
\end{proof}

%\section{Open problems}

\newpage

\section{Appendix}

\subsection{$\sigma_1$ in graphs of small order}

In this appendix, we exhaustively compute $\sigma_1(G)$, where $G$ is any graph of order $n$, where $1\le n\le 6$. If $1\le n \le 4$, then $\sigma_1(G)$ is computed in Table \ref{sigma-1-in-graphs-of-small-order}.

\begin{table}[!h]
    \centering
    % [inline block 0: 6 envs, 148997 chars -> data_tex | \begin{tabular}{|c|l|c|}         \hline...]
 
    \caption{$\sigma_1$ in graphs of order $n=6$.}
    \label{sigma-1-in-graphs-of-order-6}
\end{table}

%%%%%%%%%%%%%%%%%%%%%%%%%%%%%%%%%%%%%%%%%%%%%%%%%%%%%%%%%%%%%%%%%%%%%%%%%%%%%%%%%%%%%%%%%%%%%%%%%%%%%%%%%%%%%%%%%%%%%%%%%%%%%%%%%%%%%%%%%%%%%%%
%%%%%%%%%%%%%%%%%%%%%%%%%%%%%%%%%%%%%%%%%%%%%%%%%%%%%%%%%%%%%%%%%%%%%%%%%%%%%%%%%%%%%%%%%%%%%%%%%%%%%%%%%%%%%%%%%%%%%%%%%%%%%%%%%%%%%%%%%%%%%%%

%\newpage

\newpage
%\medskip
\bibliographystyle{abbrv} 
\bibliography{references}

\end{document}